%------------------------------------------------------------------------------
% Beginning of journal.tex
%------------------------------------------------------------------------------
%
% AMS-LaTeX version 2 sample file for journals, based on amsart.cls.
%
%        ***     DO NOT USE THIS FILE AS A STARTER.      ***
%        ***  USE THE JOURNAL-SPECIFIC *.TEMPLATE FILE.  ***
%
% Replace amsart by the documentclass for the target journal, e.g., tran-l.
%
\documentclass[12pt]{amsart}
\usepackage{amssymb}
\usepackage{amsfonts}
\usepackage{amssymb,latexsym}
\usepackage{enumerate}
\usepackage{mathrsfs}
\usepackage{hyperref}
 \usepackage{color}\makeatletter
\@namedef{subjclassname@2010}{%
  \textup{2010} Mathematics Subject Classification}
\makeatother

\ProvidesFile{ueuf.fd}
  [2002/01/19 v2.2g %
    AMS font definitions%
  ]
\DeclareFontFamily{U}{euf}{}
\DeclareFontShape{U}{euf}{m}{n}{%
  <5><6><7><8><9>gen*eufm%
  <10><10.95><12><14.4><17.28><20.74><24.88>eufm10%
  }{}
\DeclareFontShape{U}{euf}{b}{n}{%
  <5><6><7><8><9>gen*eufb%
  <10><10.95><12><14.4><17.28><20.74><24.88>eufb10%
  }{}

\ProvidesFile{umsb.fd}
  [2002/01/19 v2.2g %
    AMS font definitions%
  ]
\DeclareFontFamily{U}{msb}{}
\DeclareFontShape{U}{msb}{m}{n}{%
  <5><6><7><8><9>gen*msbm%
  <10><10.95><12><14.4><17.28><20.74><24.88>msbm10%
  }{}

\ProvidesFile{umsa.fd}
  [2002/01/19 v2.2g %
    AMS font definitions%
  ]
\DeclareFontFamily{U}{msa}{}
\DeclareFontShape{U}{msa}{m}{n}{%
  <5><6><7><8><9>gen*msam%
  <10><10.95><12><14.4><17.28><20.74><24.88>msam10%
  }{}

\newtheorem{theorem}{Theorem}[section]
\newtheorem{lemma}[theorem]{Lemma}

\newtheorem{corollary}[theorem]{Corollary}

\theoremstyle{definition}

\newtheorem{remark}[theorem]{Remark}

\numberwithin{equation}{section} \frenchspacing

\textwidth=13.5cm \textheight=23cm
\parindent=16pt
\oddsidemargin=-0.5cm \evensidemargin=-0.5cm \topmargin=-0.5cm
%    Absolute value notation

%    Blank box placeholder for figures (to avoid requiring any
%    particular graphics capabilities for printing this document).

\begin{document}

\title[]
{Identities for the Euler polynomials, $p$-adic integrals and Witt's formula}

\author{Su Hu}
\address{Department of Mathematics, South China University of Technology, Guangzhou, Guangdong 510640, China}
\email{mahusu@scut.edu.cn}

\author{Min-Soo Kim}
\address{Department of Mathematics Education, Kyungnam University, Changwon, Gyeongnam 51767, Republic of Korea}
\email{mskim@kyungnam.ac.kr}

%\thanks{*Corresponding author}

\begin{abstract}
By using Cauchy's formula, it is known that Bernoulli numbers and  Euler numbers can be represented by the  contour integrals
\begin{equation*}
\begin{aligned}
B_n&=\frac{n!}{2\pi i}\oint \frac{z}{e^z-1}\frac{d
z}{z^{n+1}},\label{condefi}\\[4pt]
E_n&=\frac{n!}{2\pi i}\oint \frac{2e^z}{e^{2z}+1}\frac{d
z}{z^{n+1}}, 
\end{aligned}
\end{equation*}
while the following Witt's formula represents  Euler polynomials through the fermionic $p$-adic integrals
 $$E_n(a)=\int_{\mathbb Z_p}(x+a)^nd\mu_{-1}(x).$$

Base on the above Witt's identity  and the binomial theorem, we prove some new identities for the Euler polynomials briefly.
In particular, some symmetry properties of Euler polynomials have been discovered, which implies many interesting identities (known or unknown), 
including the Kaneko-Momiyama type
identities (shown by Wu, Sun, and Pan)
and the Alzer-Kwong type identity for Euler polynomials.
\end{abstract}

\subjclass[2010]{11B68, 11S80}
\keywords{Euler polynomials, Symmetry properties, Fermionic $p$-adic integrals}

%\thanks{Received May 24, 2009}

\maketitle

%%
%% Start line numbering here if you want
%%
% \linenumbers

%% main text

\def\ord{\text{ord}_p}
\def\C{\mathbb C_p}
\def\BZ{\mathbb Z}
\def\Z{\mathbb Z_p}
\def\Q{\mathbb Q_p}
\def\wh{\widehat}
\def\ov{\overline}

\section{Introduction}
\label{Intro}
\subsection{Background} 

Denote by $\mathbb N=\{1,2,\ldots\}$ and $\mathbb N_0=\mathbb N\cup\{0\}.$

The Bernoulli polynomials $B_{n}(a)$ are defined by the generating function 
\begin{equation}\label{BerP}
\frac{te^{at}}{e^t-1}=\sum_{n=0}^\infty B_n(a)\frac{t^n}{n!}
\end{equation}
and $B_{n}=B_{n}(0)$ are named Bernoulli numbers.
These numbers and polynomials arise from Bernoulli's calculations of power sums in 1713, that is, 
$$
\sum_{j=1}^{m}j^{n}=\frac{B_{n+1}(m+1)-B_{n+1}}{n+1}
$$ 
(see \cite[p. 5, (2.2)]{Sun}).  

The  Euler polynomials $E_n(a)$ are defined by the generating function
\begin{equation}\label{Eu-pol}
\frac{2e^{at}}{e^t+1}=\sum_{n=0}^\infty E_n(a)\frac{t^n}{n!}.
\end{equation}
These polynomials were introduced by Euler who studied the alternating power sums, that is,
\begin{equation*}%\label{EurP2}
\sum_{j=1}^{m}(-1)^{j}j^{n}=\frac{(-1)^{{m}}E_{n}(m+1)+E_{n}(0)}{2}
\end{equation*} 
(see \cite[p. 5, (2.3)]{Sun}). 
It is easy to see that the first few  Euler polynomials are  
\begin{equation}\label{few-va}
E_0(a)=1,~E_1(a)=a-\frac12,~E_2(a)=a^2-a,~E_3(a)=a^3-\frac32a^2+\frac14.
\end{equation}
The Euler numbers $E_n$ are defined by 
\begin{equation}\label{o-Eu-num}
E_n=2^nE_n\left(\frac12\right)
\end{equation}
and $E_{2n+1}=0$ for $n\in\mathbb N$ 
(see for instance \cite[p.~804, 23.1.2]{AS} and \cite[(0.1)]{Ma16}). The Euler numbers and polynomials (so called by Scherk in 1825) appear in Euler's famous book,
Insitutiones Calculi Differentials (1755, pp. 487--491 and p. 522) and have been found many applications 
in combinatorics, number theory, classical and $p$-adic analysis. For example, see
Abramowitz and Stegun \cite{AS},
Agoh and Dilcher \cite{AD}, Chen and Sun \cite{CS},
Gessel \cite{Ge}, He and Zhang \cite{HZ}, Kim and Hu \cite{KS}, Koblitz \cite{Ko}, Robert \cite{Ro}, Simsek \cite{Sim} and Sun \cite{Su}.

From the definition (\ref{Eu-pol}) we can easily deduce the following identities:
\begin{equation}\label{Eu-pol-3}
E_n(1-a)=(-1)^nE_{n}(a),
\end{equation}
\begin{equation}\label{Eu-pol-4}
(-1)^nE_n(-a)+E_{n}(a)=2a^n
\end{equation}
(see for instance \cite[p.~804, 23.1.8 and 23.1.9]{AS} ).

Letting $a=1$ in (\ref{Eu-pol-3}), we have a relation  between  the special values of Euler polynomials at $1$ and $0:$
\begin{equation}\label{Eu-p-sp}
E_n(1)=(-1)^nE_n(0)=\begin{cases}1 &\text{for }n=0 \\ -E_n(0) &\text{for }n\neq0,\end{cases}
\end{equation}
because $E_n(0)=0$ if $n$ is even.

In 2004, Wu, Sun and Pan \cite[(7) and (9)]{WSP} proved the following interesting formulae:
\begin{equation}\label{WSP(7)}
(-1)^{m}\sum_{i=0}^{m}\binom{m}i E_{n+i}(a)=
(-1)^{n}\sum_{j=0}^{n}\binom{n}j E_{m+j}(-a)
\end{equation}
and
\begin{equation}\label{WSP(9)}
\begin{aligned}
(-1)^{m}\sum_{i=0}^{m}\binom{m+1}i & (n+i+1)E_{n+i}(a) \\
&\quad+(-1)^{n}\sum_{j=0}^{n}\binom{n+1}j (m+j+1)E_{m+j}(-a) \\
&=(-1)^{m+1}2(m+n+2)(E_{m+n+1}(a)-a^{m+n+1}).
\end{aligned}
\end{equation}
Here $\binom mi$ denotes the usual binomial coefficient.
The identity (\ref{WSP(7)}) can be viewed as a version of the Kaneko-Momiyama type
identity for Euler polynomials.
In Section \ref{proofs}, we shall give an alternative proof of  (\ref{WSP(7)}) (see Remark \ref{rem-sun} below).
The identity (\ref{WSP(9)}) is an Euler polynomial version of Kaneko-Momiyama relations among
Bernoulli numbers.
See Kaneko \cite{Ka}, Momiyama \cite{Mom}, Gessel \cite{Ge} and Wu-Sun-Pan \cite{WSP} for details.
In fact, we can see that Theorem \ref{thm1} reduces to (\ref{WSP(9)}) by setting $q=k=1$ (see Remark \ref{rem} below).
More recently, by using Zeilberger’s algorithm \cite{Zeilberger}, Chen and Sun \cite{CS} gave  new proofs of  many existing recurrence relations between
Bernoulli polynomials.

The aim of this paper is to present new identities for the Euler polynomials which extend (\ref{WSP(7)}) and (\ref{WSP(9)}).
Our main tool is the  fermionic  $p$-adic integral on $\mathbb Z_p,$
and
in the next section we shall give a brief recall of  the definition and identities for this integral.
Using them the detail proofs for the main theorems will be shown in the final section.

\subsection{Main results} Our main results and their corollaries are as follows.
\begin{theorem}\label{thm1}
Let $k,q\in\mathbb N$ and let $m,n\in\mathbb N_0$ such that $m+n>0.$ Then for given odd integer $k,$ we have
$$\begin{aligned}
(-1)^{m}\sum_{i=0}^{m+q}\binom{m+q}i &\binom{n+q+i}k E_{n+q+i-k}(a) \\
&+(-1)^{n}\sum_{j=0}^{n+q}\binom{n+q}j\binom{m+q+j}k E_{m+q+j-k}(-a)=0.
\end{aligned}$$
\end{theorem}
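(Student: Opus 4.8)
The plan is to substitute Witt's formula $E_N(a)=\int_{\mathbb{Z}_p}(x+a)^N\,d\mu_{-1}(x)$ into both sums and to absorb the factor $\binom{n+q+i}{k}$ (resp. $\binom{m+q+j}{k}$) into a $k$-th derivative in $a$. Using $\tfrac{d^k}{da^k}(x+a)^{M}=k!\binom{M}{k}(x+a)^{M-k}$ (the terms with $M<k$ dropping out since then $\binom{M}{k}=0$), I would rewrite
\[
\binom{n+q+i}{k}(x+a)^{n+q+i-k}=\frac{1}{k!}\frac{d^k}{da^k}(x+a)^{n+q+i},\qquad
\binom{m+q+j}{k}(x-a)^{m+q+j-k}=\frac{(-1)^k}{k!}\frac{d^k}{da^k}(x-a)^{m+q+j},
\]
the sign $(-1)^k$ in the second identity arising because $\tfrac{d}{da}(x-a)=-1$. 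Each integrand being a polynomial in $a$ with coefficients the finite moments $\int_{\mathbb{Z}_p}x^\ell\,d\mu_{-1}(x)$, differentiating under the integral sign is purely formal and raises no analytic issue.

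Next I would interchange the (finite) sums with the derivative and the integral, and collapse the inner sums by the binomial theorem, using
\[
\sum_{i=0}^{m+q}\binom{m+q}{i}(x+a)^{n+q+i}=(x+a)^{n+q}(x+a+1)^{m+q},
\]
and the analogous identity $\sum_{j=0}^{n+q}\binom{n+q}{j}(x-a)^{m+q+j}=(x-a)^{m+q}(x-a+1)^{n+q}$. Setting
\[
I_1(a)=\int_{\mathbb{Z}_p}(x+a)^{n+q}(x+a+1)^{m+q}\,d\mu_{-1}(x),\quad
I_2(a)=\int_{\mathbb{Z}_p}(x-a)^{m+q}(x-a+1)^{n+q}\,d\mu_{-1}(x),
\]
the left-hand side of the theorem becomes $\tfrac{1}{k!}\tfrac{d^k}{da^k}\bigl[(-1)^{m}I_1(a)+(-1)^{n+k}I_2(a)\bigr]$. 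Thus it suffices to establish the clean symmetric relation $(-1)^{m}I_1(a)=(-1)^{n}I_2(a)$ and then to exploit the parity of $k$.

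The decisive step — and the one I expect to be the main obstacle — is a reflection symmetry of the fermionic measure, namely $\int_{\mathbb{Z}_p}f(-1-x)\,d\mu_{-1}(x)=\int_{\mathbb{Z}_p}f(x)\,d\mu_{-1}(x)$; taking $f(x)=(x+a)^N$ this is precisely the functional equation $E_N(1-a)=(-1)^NE_N(a)$ of (\ref{Eu-pol-3}), so it will be available among the integral identities recalled in the following section. Applying the substitution $x\mapsto-1-x$ to $I_1$ carries the integrand $(x+a)^{n+q}(x+a+1)^{m+q}$ into $(-1)^{m+n}(x-a)^{m+q}(x-a+1)^{n+q}$ (the factor $(-1)^{2q}$ disappears), so the symmetry yields $I_1(a)=(-1)^{m+n}I_2(a)$, i.e. $(-1)^{m}I_1(a)=(-1)^{n}I_2(a)$. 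Substituting back gives
\[
(-1)^{m}I_1(a)+(-1)^{n+k}I_2(a)=(-1)^{n}\bigl(1+(-1)^{k}\bigr)I_2(a),
\]
which vanishes exactly because $k$ is odd; applying $\tfrac1{k!}\tfrac{d^k}{da^k}$ then delivers the stated identity. The only genuine care needed is in the sign bookkeeping of the reflection step and in checking that the hypotheses cause no degeneracy — but each term carrying a negative Euler-polynomial index is already killed by its vanishing binomial coefficient, so the formal derivative identities remain valid throughout.
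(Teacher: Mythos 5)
Your proof is correct and rests on the same engine as the paper's: the functional equation $I_{-1}(f_-)=I_{-1}(f_1)$ of Lemma \ref{lem1} applied to the product polynomial $(x+a)^{n+q}(x+a+1)^{m+q}$ and its reflection, the binomial collapse of the two sums, and a $k$-th derivative whose sign $(-1)^k$ supplies the cancellation for odd $k$. The only (cosmetic) difference is that you differentiate in the parameter $a$ after integrating and isolate the clean intermediate symmetry $(-1)^mI_1(a)=(-1)^nI_2(a)$, whereas the paper differentiates the combined polynomial $h$ in $x$ before integrating and shows $\int_{\mathbb Z_p}h^{(k)}(x+1)\,d\mu_{-1}(x)=0$ directly.
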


\begin{remark}\label{rem}
Considering the identity (\ref{Eu-pol-4}), it is easily checked that
\begin{equation}\label{rem-9}
\begin{aligned}
(-1)^m&(m+n+2)E_{m+n+1}(a)+(-1)^n(m+n+2)E_{m+n+1}(-a) \\
&=(-1)^m(m+n+2)(E_{m+n+1}(a)+(-1)^{m+n}E_{m+n+1}(-a)) \\
&=(-1)^m2(m+n+2)(E_{m+n+1}(a)-a^{m+n+1}).
\end{aligned}
\end{equation}
Thus letting $q=k=1$ in Theorem \ref{thm1}, we recover Wu, Sun and Pan's  formula (\ref{WSP(9)}).
\end{remark}

Put  $m=n,q=k$ and $a=0$ in Theorem \ref{thm1}, we get an Euler polynomial version of
Z\'ekiri and Bencherif’s formula which was stated for Bernoulli numbers (see {\cite[Theorem 1.1]{ZB}}).
\begin{corollary}\label{cro0}
Let $n\in\mathbb N_0.$ Then for given odd integers $q,$ we have
$$\sum_{i=0}^{m+q}\binom{m+q}i (n+q+i)(n+q+i-1)\cdots(n+i+1) E_{n+i}(0)=0.$$
\end{corollary}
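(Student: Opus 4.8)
The plan is to derive the corollary as a pure specialization of Theorem \ref{thm1}, requiring no new tools. First I would substitute $m=n$, $k=q$ and $a=0$; the theorem's requirement that $k$ be odd then becomes exactly the hypothesis that $q$ be odd. The key observation is that at $a=0$ the two sums in Theorem \ref{thm1} collapse onto one and the same expression. Indeed, with $k=q$ the shifted indices simplify to $n+q+i-k=n+i$ and $m+q+j-k=n+j$, while $-a=0=a$; hence $E_{n+q+i-k}(a)=E_{n+i}(0)$ and $E_{m+q+j-k}(-a)=E_{n+j}(0)$. After renaming the second summation index $j\mapsto i$, the binomial weights $\binom{n+q}{i}\binom{n+q+i}{q}$ and the prefactor $(-1)^n$ also coincide, so the two sums become term-by-term identical.

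Carrying this out, Theorem \ref{thm1} reads
\[
2(-1)^n\sum_{i=0}^{n+q}\binom{n+q}{i}\binom{n+q+i}{q}E_{n+i}(0)=0,
\]
and dividing by the nonzero constant $2(-1)^n$ gives $\sum_{i=0}^{n+q}\binom{n+q}{i}\binom{n+q+i}{q}E_{n+i}(0)=0$.

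It then remains only to recast the binomial coefficient as a falling product. Writing $N=n+q+i$, one has $\binom{N}{q}=N(N-1)\cdots(N-q+1)/q!$, and since $N-q+1=n+i+1$ the numerator is precisely $(n+q+i)(n+q+i-1)\cdots(n+i+1)$, a product of $q$ consecutive integers. Multiplying the displayed identity through by $q!$ clears this denominator and yields the asserted formula (with $m$ and $n$ identified, so that $\binom{m+q}{i}$ is to be read as $\binom{n+q}{i}$).

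I expect the only delicate point to be the boundary hypothesis $m+n>0$ in Theorem \ref{thm1}: taking $m=n$ forces $n\ge 1$, whereas the corollary is asserted for all $n\in\mathbb N_0$. The value $n=0$ thus calls for a short direct check; for example, when $q=1$ the sum is $\binom{1}{0}\cdot 1\cdot E_0(0)+\binom{1}{1}\cdot 2\cdot E_1(0)=1-1=0$, and the general $n=0$ instance can be verified from the special values $E_j(0)$ together with (\ref{Eu-p-sp}). Beyond this endpoint and the routine index bookkeeping, the proof is essentially a one-line substitution.
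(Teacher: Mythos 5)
Your proposal is correct and takes essentially the same route as the paper, which obtains the corollary precisely by setting $m=n$, $q=k$ and $a=0$ in Theorem~\ref{thm1} and gives no further detail. Your observation that the hypothesis $m+n>0$ formally excludes the case $n=0$, which therefore needs a separate (easy) check, is a point the paper silently glosses over.
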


Note that the above identity is also an Euler polynomial version of  Kaneko's identity \cite{Ka} when $q=1,$ Chen and Sun's identity \cite{CS} when $q=3.$

Put $a=0$ and $q=k=1$ in Theorem \ref{thm1}, we have the following analogue of Momiyama's formula \cite{Mom}.

\begin{corollary}\label{cro1}
Let $m,n\in\mathbb N_0$ such that $m+n>0.$ Then we have
$$\begin{aligned}
\sum_{i=0}^{m+1}\binom{m+1}i &(n+i+1)E_{n+i}(0) \\
&+(-1)^{m+n}\sum_{j=0}^{n+1}\binom{n+1}j (m+j+1)E_{m+j}(0)=0.
\end{aligned}$$
\end{corollary}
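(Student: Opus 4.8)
The plan is to obtain this corollary as a direct specialization of Theorem~\ref{thm1}. I would set $q=1$ and $k=1$, noting first that the hypothesis of Theorem~\ref{thm1} requires $k$ to be an \emph{odd} integer, which is satisfied since $k=1$ is odd; I would also take $a=0$. With these choices the two upper summation limits $m+q$ and $n+q$ become $m+1$ and $n+1$, matching the ranges in the statement, and the hypothesis $m+n>0$ carries over verbatim.

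Next I would simplify the inner data. The binomial coefficients $\binom{n+q+i}{k}$ and $\binom{m+q+j}{k}$ collapse via $\binom{N}{1}=N$, giving $\binom{n+1+i}{1}=n+i+1$ and $\binom{m+1+j}{1}=m+j+1$. The subscripts of the Euler polynomials simplify via $n+q+i-k=n+i$ and $m+q+j-k=m+j$, and with $a=0$ both arguments $a$ and $-a$ equal $0$. After these substitutions Theorem~\ref{thm1} reads
$$
(-1)^{m}\sum_{i=0}^{m+1}\binom{m+1}{i}(n+i+1)E_{n+i}(0)+(-1)^{n}\sum_{j=0}^{n+1}\binom{n+1}{j}(m+j+1)E_{m+j}(0)=0.
$$

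Finally I would clear the leading sign on the first sum by dividing through by $(-1)^{m}$. This turns the coefficient of the first sum into $1$ and the coefficient of the second into $(-1)^{\,n-m}=(-1)^{m+n}$, since $(-1)^{-m}=(-1)^{m}$; the result is precisely the asserted identity. There is essentially no real obstacle here, as the corollary is a pure specialization: the only points requiring care are the index shift in the Euler-polynomial subscripts and the sign bookkeeping in this last step, together with the trivial verification that $k=1$ meets the odd-$k$ restriction of Theorem~\ref{thm1}.
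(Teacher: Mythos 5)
Your proposal is correct and is exactly the paper's route: the corollary is obtained there by setting $a=0$ and $q=k=1$ in Theorem~\ref{thm1}, and your simplifications (collapsing $\binom{N}{1}=N$, shifting the subscripts, and multiplying through by $(-1)^m$ to produce the $(-1)^{m+n}$ sign) are the same bookkeeping the paper leaves implicit.
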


If we put $m=n$ in Corollary \ref{cro1}, then we get the following dentity.

\begin{corollary}\label{cro2}
For any $n\in\mathbb N_0,$
$$\sum_{j=0}^{n+1}\binom{n+1}j (n+j+1)E_{n+j}(0)=0.$$
\end{corollary}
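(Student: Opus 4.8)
The plan is to derive this directly from Corollary \ref{cro1} by specializing its two free parameters to a single one, setting $m=n$. Since Corollary \ref{cro1} is an identity symmetric in the roles of $m$ and $n$ (the two sums are interchanged by swapping $m\leftrightarrow n$ and $i\leftrightarrow j$), collapsing $m=n$ should force the two sums to coincide and the alternating sign to trivialize, producing twice the target sum.

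Concretely, I would substitute $m=n$ into Corollary \ref{cro1}. The prefactor of the second sum becomes $(-1)^{m+n}=(-1)^{2n}=1$, and the second sum
$$\sum_{j=0}^{n+1}\binom{n+1}{j}(m+j+1)E_{m+j}(0)$$
turns into $\sum_{j=0}^{n+1}\binom{n+1}{j}(n+j+1)E_{n+j}(0)$, which is identical to the first sum after renaming the dummy index $i\mapsto j$. Hence Corollary \ref{cro1} reads
$$2\sum_{j=0}^{n+1}\binom{n+1}{j}(n+j+1)E_{n+j}(0)=0,$$
and dividing by $2$ yields the claimed identity. This disposes of every $n$ for which the hypothesis $m+n>0$ of Corollary \ref{cro1} is met, i.e. every $n\ge 1$.

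The only real obstacle, and it is a very mild one, is the boundary value $n=0$: here $m+n=0$ violates the hypothesis of Corollary \ref{cro1}, so the deduction above does not apply and the case must be checked by hand. For $n=0$ the asserted sum is
$$\binom{1}{0}(0+1)E_0(0)+\binom{1}{1}(1+1)E_1(0)=1\cdot 1\cdot E_0(0)+1\cdot 2\cdot E_1(0),$$
and inserting the values $E_0(0)=1$ and $E_1(0)=-\tfrac12$ from \eqref{few-va} gives $1-1=0$, confirming the identity. Combining this direct verification at $n=0$ with the specialization argument for $n\ge 1$ establishes the corollary for all $n\in\mathbb N_0$.
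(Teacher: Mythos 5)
Your proof is correct and follows the paper's own route exactly: the paper obtains Corollary \ref{cro2} simply by setting $m=n$ in Corollary \ref{cro1}, which is precisely your specialization argument. Your separate verification of the $n=0$ case is a welcome extra bit of care, since the hypothesis $m+n>0$ of Corollary \ref{cro1} is indeed violated there and the paper passes over this point silently.
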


This can be regarded as an Euler polynomial version of Kaneko's formulae which was stated for
Bernoulli numbers (see \cite{Ka}). From the above identity, we have a recurrence relationship 
\begin{equation}\label{Eu-pol-re}
E_{2n+1}(0)=-\frac{1}{2(n+1)}\sum_{j=0}^{n}\binom{n+1}j (n+j+1)E_{n+j}(0),
\end{equation}
It is an interest, sinece from this, in order to to calculate 
$E_{2n+1}(0),$ we only needs half of the previous terms ($E_k(0)$ with $n\leq k\leq2n$ with $k$ odd). 

We show that the $p$-adic integral is also work for proving the following Sun's identity for Euler polynomials.

\begin{theorem}[{Sun's identity, see \cite[Theorem 1.2(iii)]{Su}}]\label{sun-thm}
We have
$$\begin{aligned}
(-1)^m\sum_{i=0}^{m}\binom{m}i a^{m-i}E_{n+i}(b)=(-1)^{n}\sum_{j=0}^{n}\binom{n}j a^{n-j}E_{m+j}(c),
\end{aligned}$$
where $a+b+c=1.$
\end{theorem}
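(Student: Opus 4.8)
The plan is to feed both sides of the asserted identity into Witt's formula $E_\nu(t)=\int_{\mathbb{Z}_p}(x+t)^\nu\,d\mu_{-1}(x)$, pull the finite sums inside the integral, and collapse them with the binomial theorem. The two sides then become two fermionic $p$-adic integrals that I can match using the reflection symmetry of $\mu_{-1}$ together with the constraint $a+b+c=1$.

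First I would rewrite the left-hand side. By Witt's formula and linearity of the integral,
\begin{equation*}
(-1)^m\sum_{i=0}^{m}\binom{m}i a^{m-i}E_{n+i}(b)=(-1)^m\int_{\mathbb{Z}_p}(x+b)^n\sum_{i=0}^{m}\binom{m}i a^{m-i}(x+b)^i\,d\mu_{-1}(x),
\end{equation*}
and the inner sum is $(a+x+b)^m$ by the binomial theorem, so the left-hand side equals $(-1)^m\int_{\mathbb{Z}_p}(x+b)^n(x+a+b)^m\,d\mu_{-1}(x)$. The identical manipulation on the right-hand side gives $(-1)^n\int_{\mathbb{Z}_p}(x+c)^m(x+a+c)^n\,d\mu_{-1}(x)$. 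Thus everything reduces to showing that these two integrals, with their prefactors, agree.

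The decisive ingredient is the reflection property of the fermionic integral, $\int_{\mathbb{Z}_p}f(x)\,d\mu_{-1}(x)=\int_{\mathbb{Z}_p}f(-1-x)\,d\mu_{-1}(x)$ (the integral incarnation of $E_n(1-a)=(-1)^nE_n(a)$ in \eqref{Eu-pol-3}), which I expect to have available from the preparatory section. Applying it to $f(x)=(x+b)^n(x+a+b)^m$ and substituting $a+b+c=1$, so that $1-b=a+c$ and $1-a-b=c$, I get $-1-x+b=-(x+a+c)$ and $-1-x+a+b=-(x+c)$. Hence $f(-1-x)=(-1)^{m+n}(x+a+c)^n(x+c)^m$, and the reflection property turns the left-hand integral into $(-1)^m(-1)^{m+n}\int_{\mathbb{Z}_p}(x+c)^m(x+a+c)^n\,d\mu_{-1}(x)=(-1)^n\int_{\mathbb{Z}_p}(x+c)^m(x+a+c)^n\,d\mu_{-1}(x)$, which is exactly the right-hand side.

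The computation is short once the reflection property is in hand, so the only genuine point of care is the bookkeeping: correctly converting the two linear factors under $x\mapsto-1-x$ via $a+b+c=1$, and tracking the sign $(-1)^{m+n}$ the two factors contribute, which is precisely what flips the prefactor from $(-1)^m$ to $(-1)^n$. I do not anticipate a real obstacle beyond making sure the reflection identity is recorded in exactly the form above in the earlier section (and, if one wishes to be scrupulous, noting that the argument runs over odd $p$ while the conclusion is a polynomial identity in $a,b,c$ over $\mathbb{Q}$).
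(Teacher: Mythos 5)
Your proposal is correct and is essentially the paper's own proof read in reverse: your collapsed integrand $(-1)^m(x+b)^n(x+a+b)^m$ is exactly the paper's $g(x+1)$ for $g(x)=(-1)^m(x+a+b-1)^m(x+b-1)^n$, and your reflection property $\int f(x)\,d\mu_{-1}=\int f(-1-x)\,d\mu_{-1}$ is just the identity $I_{-1}(f_1)=I_{-1}(f_-)$ of Lemma \ref{lem1} applied to the shifted function. The sign bookkeeping via $a+b+c=1$ matches the paper's computation of $g(-x)$, so no gap remains.
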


\begin{remark}\label{rem-sun}
In the case $a=1,b=t$ and $c=-t,$ Theorem \ref{sun-thm} reduces to  (\ref{WSP(7)}).
\end{remark}

Putting $a=1,$ $b=1/2$ and $c=-1/2$ in Theorem \ref{sun-thm}, we get the following consequence which can also be found in Chen and Sun's work (see \cite[Theorem 6.1]{CS}).

\begin{corollary}\label{sun-thm-cor}
We have
$$(-1)^m\sum_{i=0}^{m}\binom{m}i \frac{E_{n+i}}{2^{n+i}}=(-1)^n\sum_{j=0}^{n}\binom{n}j E_{m+j}\left(-\frac12\right),$$
where $m,n\in\mathbb N_0.$
\end{corollary}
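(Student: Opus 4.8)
The plan is to obtain this corollary directly from Theorem \ref{sun-thm} by specializing its free parameters and then invoking the definition of the Euler numbers. First I would set $a=1$, $b=\frac12$, and $c=-\frac12$; these satisfy the required constraint $a+b+c=1$, so Theorem \ref{sun-thm} applies verbatim. Because $a=1$, each power $a^{m-i}$ and $a^{n-j}$ collapses to $1$, and the identity reduces to
$$(-1)^m\sum_{i=0}^m\binom mi E_{n+i}\left(\frac12\right)=(-1)^n\sum_{j=0}^n\binom nj E_{m+j}\left(-\frac12\right).$$

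Next I would rewrite the left-hand side using the defining relation (\ref{o-Eu-num}) between the Euler numbers and the Euler polynomials. Since $E_r=2^rE_r\left(\frac12\right)$, we have $E_{n+i}\left(\frac12\right)=E_{n+i}/2^{n+i}$ for each index $i$. Substituting this termwise into the left-hand sum yields exactly $(-1)^m\sum_{i=0}^m\binom mi E_{n+i}/2^{n+i}$, which is the left-hand side of the stated corollary; the right-hand side is already in the desired form, completing the derivation.

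Since the corollary is a pure specialization of an already-established identity, there is no genuine obstacle here. The only points requiring a moment's attention are verifying that the chosen values satisfy the hypothesis $a+b+c=1$ and correctly applying (\ref{o-Eu-num}) to pass from the polynomial value $E_{n+i}\left(\frac12\right)$ to the normalized Euler number $E_{n+i}/2^{n+i}$ on the left; no manipulation is needed on the right, since $c=-\frac12$ leaves $E_{m+j}\left(-\frac12\right)$ untouched.
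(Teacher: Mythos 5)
Your proposal is correct and follows exactly the paper's route: the paper also obtains this corollary by setting $a=1$, $b=\tfrac12$, $c=-\tfrac12$ in Theorem \ref{sun-thm} and then converting $E_{n+i}\left(\tfrac12\right)$ to $E_{n+i}/2^{n+i}$ via the definition (\ref{o-Eu-num}). Nothing further is needed.
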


Motivated by the above works, we have  the following general result.

\begin{theorem}\label{thm2}
Let $s\in\mathbb N$ and let $k,m,n\in\mathbb N_0$ such that $m+n>0.$ Then we have
$$\begin{aligned}
&\delta_{s,k}\biggl( \sum_{i=0}^{m+1}(s+1)^{m-i+1}\binom{m+1}i\binom{n+i+1}{k}E_{n+i-k+1}(a)  \\
&\quad\quad +(-1)^{m+n} \sum_{j=0}^{n+1}(s+1)^{n-j+1}\binom{n+1}j\binom{m+j+1}{k}E_{m+j-k+1}(-a)
 \biggl) \\
&\quad\quad=\frac2{k!}\sum_{l=1}^s(-1)^lP_{m,n,s}^{(k)}(l;a),
\end{aligned}$$
where $P_{m,n,s}^{(k)}(l;a)$ satisfy the relation
$$\begin{aligned}
P_{m,n,s}^{(k)}(l;a)&=\frac{d^k}{d x^k}\biggl((x+a)^{m+1}(x+a-s-1)^{n+1} \\
&\qquad\qquad+(-1)^{m+n}(x-a)^{n+1}(x-a-s-1)^{m+1}\biggl)\biggl|_{x=l}
\end{aligned}$$
and $\delta_{s,k}$ can be expressed as
$$\delta_{s,k}=(-1)^s-(-1)^k=\begin{cases}
+2 &\text{for $s$ even, $k$ odd}, \\
-2 &\text{for $s$ odd, $k$ even}, \\
0 &\text{otherwise}.\\
\end{cases}$$
\end{theorem}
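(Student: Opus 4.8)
The plan is to convert each of the two sums multiplying $\delta_{s,k}$ into a single fermionic integral and then to exploit the shift relation for $\mu_{-1}$. Write $S_1$ and $S_2$ for the first and second sums on the left. By the binomial theorem $\sum_{i=0}^{m+1}(s+1)^{m+1-i}\binom{m+1}{i}u^{n+i+1}=u^{n+1}(u+s+1)^{m+1}=:\phi(u)$, and since $\frac{d^k}{du^k}u^{n+i+1}=k!\binom{n+i+1}{k}u^{n+i-k+1}$, Witt's formula $E_N(a)=\int_{\Z}(x+a)^N\,d\mu_{-1}(x)$ gives $k!\,S_1=\int_{\Z}\phi^{(k)}(x+a)\,d\mu_{-1}(x)$. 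Swapping $m\leftrightarrow n$ and $a\mapsto-a$ yields $k!\,S_2=\int_{\Z}\widetilde\phi^{(k)}(x-a)\,d\mu_{-1}(x)$ with $\widetilde\phi(v)=v^{m+1}(v+s+1)^{n+1}$. Hence it suffices to analyse $S:=k!\bigl(S_1+(-1)^{m+n}S_2\bigr)$, the integral of $\phi^{(k)}(x+a)+(-1)^{m+n}\widetilde\phi^{(k)}(x-a)$.

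The key observation is the shift identity $\phi(u-s-1)=u^{m+1}(u-s-1)^{n+1}=:\psi(u)$, so $\psi$ is exactly the first factor in $P_{m,n,s}^{(k)}$, with $\psi^{(k)}(l+a)=\phi^{(k)}(l+a-s-1)$. Iterating the basic fermionic relation $\int_{\Z}f(x+1)\,d\mu_{-1}+\int_{\Z}f(x)\,d\mu_{-1}=2f(0)$ produces a shift-by-$(s+1)$ formula, which applied to $g(x)=\phi^{(k)}(x+a)$ gives
\[
k!\,S_1=2\sum_{l=0}^{s}(-1)^{s-l}\psi^{(k)}(l+a)+(-1)^{s+1}\int_{\Z}\psi^{(k)}(x+a)\,d\mu_{-1}(x),
\]
and the analogue for $S_2$ with $\widetilde\psi(w)=w^{n+1}(w-s-1)^{m+1}$. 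Adding these with weights $1$ and $(-1)^{m+n}$, the boundary terms assemble precisely into $2\sum_{l=0}^{s}(-1)^{s-l}P_{m,n,s}^{(k)}(l;a)$, because $P_{m,n,s}^{(k)}(l;a)=\psi^{(k)}(l+a)+(-1)^{m+n}\widetilde\psi^{(k)}(l-a)$.

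To close the system I must re-express the two residual integrals through $S_1,S_2$. Here I would use the elementary reflections $\psi(w)=(-1)^{m+n}\widetilde\phi(-w)$ and $\widetilde\psi(w)=(-1)^{m+n}\phi(-w)$, hence $\psi^{(k)}(w)=(-1)^{m+n+k}\widetilde\phi^{(k)}(-w)$, together with the reflection symmetry of $\mu_{-1}$ derived from \eqref{Eu-pol-3}, namely $\int_{\Z}h(-1-x)\,d\mu_{-1}(x)=\int_{\Z}h(x)\,d\mu_{-1}(x)$, followed by one more use of the shift-by-$1$ relation. This turns $\int_{\Z}\psi^{(k)}(x+a)\,d\mu_{-1}$ into $(-1)^{m+n+k}\bigl(2\widetilde\phi^{(k)}(-a)-k!\,S_2\bigr)$ and the other integral into $(-1)^{m+n+k}\bigl(2\phi^{(k)}(a)-k!\,S_1\bigr)$. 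Substituting back gives one scalar equation for $S$ whose coefficient collapses to $1+(-1)^{s+k+1}$, which vanishes exactly when $s\equiv k\pmod 2$, matching $\delta_{s,k}=0$.

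Finally I would account for the stray boundary terms: the $l=0$ term $(-1)^{s}P_{m,n,s}^{(k)}(0;a)$ equals $(-1)^{s+m+n+k}\widetilde\phi^{(k)}(-a)+(-1)^{s+k}\phi^{(k)}(a)$, and these cancel exactly the correction terms $2\widetilde\phi^{(k)}(-a)$ and $2\phi^{(k)}(a)$ produced by the shift-by-$1$ step. What survives, when $s\not\equiv k$, is $S=\sum_{l=1}^{s}(-1)^{s-l}P_{m,n,s}^{(k)}(l;a)$; multiplying by $\delta_{s,k}$ and using the numerical identity $\delta_{s,k}(-1)^{s-l}=2(-1)^{l}$ (valid since $\delta_{s,k}(-1)^{s}=2$ when $s\not\equiv k$) yields the claimed formula, while when $s\equiv k$ the same cancellation forces $\sum_{l=1}^{s}(-1)^{l}P_{m,n,s}^{(k)}(l;a)=0$, consistent with $\delta_{s,k}=0$. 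The main obstacle is precisely this last bookkeeping: tracking the signs $(-1)^{m+n+k}$ through the reflection step and verifying that the shift-by-$1$ boundary corrections cancel the $l=0$ endpoint of the summation, so that $\delta_{s,k}$ emerges cleanly.
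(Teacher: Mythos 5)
Your argument is correct and is essentially the paper's own proof: both rest on Witt's formula, the binomial expansion of $u^{n+1}(u+s+1)^{m+1}$, the iterated fermionic shift relation, and the reflection $\int_{\mathbb Z_p} f(-x)\,d\mu_{-1}(x)=\int_{\mathbb Z_p} f(x+1)\,d\mu_{-1}(x)$ applied to the palindromic polynomial $P_{m,n,s}(x;a)$. The only difference is organizational --- the paper telescopes from $x+1$ to $x+s+1$ and invokes the symmetry $P_{m,n,s}(x+s+1;a)=P_{m,n,s}(-x;a)$ of the combined polynomial in a single step, whereas you telescope from $x$ to $x+s+1$, reflect the two summands of $P_{m,n,s}$ separately via $\psi(w)=(-1)^{m+n}\widetilde\phi(-w)$, and then cancel the $l=0$ boundary term against the extra shift-by-one corrections; I checked that this bookkeeping closes correctly and yields the same coefficient $1-(-1)^{s+k}$ and hence $\delta_{s,k}$.
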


Using Theorem \ref{thm2}, we may obtain many (well-known or new) properties for Euler numbers and polynomials. Here are some few examples.

Put $m=n,s=1$ and $a=0$ in Theorem \ref{thm2}, we have

\begin{corollary}\label{thm2-cro1}
\begin{enumerate}
\item If $k$ is even, then
$$\sum_{i=0}^{n+1}\frac{(-1)^i}{2^i}\binom{n+1}i \binom{n+i+1}k\left((-1)^iE_{n+i-k+1}(0)+(-1)^{n}\right)=0.$$
\item If $k$ is odd, then
$$\sum_{i=0}^{n+1}\frac{(-1)^i}{2^{i}}\binom{n+1}i\binom{n+i+1}k=0.$$
\end{enumerate}
\end{corollary}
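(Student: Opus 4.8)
The plan is to specialize Theorem \ref{thm2} to $m=n$, $s=1$, $a=0$ and to split the conclusion according to the parity of $k$. First I would substitute these values. Because $(-1)^{m+n}=(-1)^{2n}=1$ and the two sums on the left-hand side are carried into one another by the index swap $i\leftrightarrow j$, they coincide; with $s=1$ one has $(s+1)^{n-i+1}=2^{n-i+1}$ and the outer sum collapses to the single term $l=1$. The specialized identity then reads
$$2\,\delta_{1,k}\sum_{i=0}^{n+1}2^{n-i+1}\binom{n+1}i\binom{n+i+1}k E_{n+i-k+1}(0)=-\frac{2}{k!}P_{n,n,1}^{(k)}(1;0),$$
where $\delta_{1,k}=-1-(-1)^k$ equals $-2$ for $k$ even and $0$ for $k$ odd.

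The key computation is the evaluation of $P_{n,n,1}^{(k)}(1;0)$. With $a=0$, $m=n$, $s=1$ both summands defining $P$ reduce to $x^{n+1}(x-2)^{n+1}$, so $P_{n,n,1}^{(k)}(1;0)=2\,\frac{d^k}{dx^k}\bigl(x^{n+1}(x-2)^{n+1}\bigr)\big|_{x=1}$. I would make the shift $x=u+1$, which turns $x^{n+1}(x-2)^{n+1}$ into $(u^2-1)^{n+1}$; since the derivative is evaluated at $u=0$ it extracts $k!$ times the coefficient of $u^k$. Hence $P_{n,n,1}^{(k)}(1;0)=2\,k!\,[u^k](u^2-1)^{n+1}$, which vanishes when $k$ is odd and equals $2\,k!\,(-1)^{n+1-k/2}\binom{n+1}{k/2}$ when $k$ is even.

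For $k$ even, inserting this value, cancelling the common factor $-4$, and dividing by $2^{n+1}$ gives $\sum_{i=0}^{n+1}2^{-i}\binom{n+1}i\binom{n+i+1}kE_{n+i-k+1}(0)=2^{-(n+1)}(-1)^{n+1-k/2}\binom{n+1}{k/2}$. To recast this in the homogeneous form of part (1) I would invoke the companion binomial identity $\sum_{i=0}^{n+1}2^{-i}(-1)^i\binom{n+1}i\binom{n+i+1}k=2^{-(n+1)}[t^k](1-t^2)^{n+1}$, proved by writing $\binom{n+i+1}k=[t^k](1+t)^{n+i+1}$ and summing the inner binomial series to $(1+t)^{n+1}\bigl((1-t)/2\bigr)^{n+1}=2^{-(n+1)}(1-t^2)^{n+1}$. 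For $k$ even the right-hand side equals $2^{-(n+1)}(-1)^{k/2}\binom{n+1}{k/2}$, and the short sign check $(-1)^{n+1-k/2}+(-1)^n(-1)^{k/2}=0$ is exactly the cancellation asserted by part (1). For $k$ odd the specialized Theorem \ref{thm2} degenerates to $0=0$ (both $\delta_{1,k}$ and $P$ vanish), and the same generating-function identity gives $[t^k](1-t^2)^{n+1}=0$, which is precisely part (2).

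I expect the main obstacle to be organizational rather than computational: recognizing that the constant terms $(-1)^n$ occurring in part (1), together with the whole of part (2), are governed by the single generating function $(1-t^2)^{n+1}$ that also controls $P_{n,n,1}^{(k)}(1;0)$, so that the closed form emerging from Theorem \ref{thm2} and the companion binomial sum add up to zero. Some care will be needed with the bookkeeping of the powers of $2$ and the parity-dependent signs $(-1)^{n+1-k/2}$ versus $(-1)^{k/2}$.
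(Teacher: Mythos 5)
Your proof is correct: the specialization $m=n$, $s=1$, $a=0$ of Theorem \ref{thm2}, the evaluation $P_{n,n,1}^{(k)}(1;0)=2\,k!\,[u^k](u^2-1)^{n+1}$ via the shift $x=u+1$, the companion identity $\sum_{i=0}^{n+1}(-1)^i2^{-i}\binom{n+1}{i}\binom{n+i+1}{k}=2^{-(n+1)}[t^k](1-t^2)^{n+1}$, and the final sign cancellation all check out. The paper offers no written argument for Corollary \ref{thm2-cro1} beyond the instruction to substitute into Theorem \ref{thm2}, and the intended bookkeeping differs from yours in one organizational respect: instead of putting $P_{n,n,1}^{(k)}(1;0)$ into closed form, one keeps it in the binomially expanded form of (\ref{pf-3}) evaluated at $x=-1$, namely $P_{n,n,1}^{(k)}(1;0)=2\,k!\sum_{i}2^{n-i+1}(-1)^{n+i+1-k}\binom{n+1}{i}\binom{n+i+1}{k}$; with that choice the $(-1)^n$ term of part (1) is literally this expansion of the right-hand side and the identity drops out at once for $k$ even, while for $k$ odd the vanishing of $\delta_{1,k}$ forces $P_{n,n,1}^{(k)}(1;0)=0$ and the same expansion yields part (2) --- so part (2) can be read off from the theorem itself rather than requiring an independent binomial identity, as your remark about the case $k$ odd ``degenerating to $0=0$'' suggests. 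Your route costs one extra generating-function lemma but proves strictly more, since it gives closed-form evaluations of the two sums separately, $\sum_{i}2^{-i}\binom{n+1}{i}\binom{n+i+1}{k}E_{n+i-k+1}(0)=2^{-(n+1)}(-1)^{n+1-k/2}\binom{n+1}{k/2}$ for $k$ even, not merely the asserted cancellation.
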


Put $m=n,s=2$ and $a=0$ in Theorem \ref{thm2}, we obtain

\begin{corollary}\label{thm2-cro2}
\begin{enumerate}
\item If $k$ is odd, then
$$\begin{aligned}
\sum_{i=0}^{n+1}(-1)^i3^{n-i+1}&\binom{n+1}i\binom{n+i+1}k  \\
&\times((-1)^iE_{n+i-k+1}(0)+(-1)^n(2^{n+i-k+1}-1))=0.
\end{aligned}$$
\item If $k$ is even, then
$$\sum_{i=0}^{n+1}(-1)^i3^{n-i+1}\binom{n+1}i\binom{n+i+1}k(2^{n+i-k+1}-1)=0.$$
\end{enumerate}
\end{corollary}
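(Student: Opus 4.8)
The plan is to obtain Corollary~\ref{thm2-cro2} directly from Theorem~\ref{thm2} by the substitution $m=n$, $s=2$, $a=0$, and then to evaluate both sides explicitly. First I would record that with $s=2$ the sign factor becomes $\delta_{2,k}=(-1)^2-(-1)^k=1-(-1)^k$, so that $\delta_{2,k}=2$ when $k$ is odd and $\delta_{2,k}=0$ when $k$ is even; this is exactly what splits the statement into its two cases. I would also note that, since $m=n$ and $a=0$, the two sums inside the large parentheses of Theorem~\ref{thm2} become literally identical and $(-1)^{m+n}=1$, so the bracketed expression equals twice the single sum
$$\Sigma:=\sum_{i=0}^{n+1}3^{\,n-i+1}\binom{n+1}i\binom{n+i+1}k E_{n+i-k+1}(0).$$

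Next I would compute the right-hand side. With $m=n$ and $a=0$ the inner polynomial collapses to $2\,x^{n+1}(x-3)^{n+1}$, so that $P_{n,n,2}^{(k)}(l;0)=2\,\frac{d^k}{dx^k}\bigl[x^{n+1}(x-3)^{n+1}\bigr]\big|_{x=l}$. Expanding $(x-3)^{n+1}$ by the binomial theorem and differentiating $k$ times term by term gives
$$\frac{d^k}{dx^k}\bigl[x^{n+1}(x-3)^{n+1}\bigr]\big|_{x=l}=k!\sum_{i=0}^{n+1}(-1)^{\,n+1-i}3^{\,n+1-i}\binom{n+1}i\binom{n+i+1}k\,l^{\,n+i+1-k},$$
where I use $\frac{(n+i+1)!}{(n+i+1-k)!}=k!\binom{n+i+1}k$. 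Since $s=2$ the right-hand side of Theorem~\ref{thm2} only involves $l=1$ and $l=2$ with signs $(-1)^l$, so after the prefactor $\tfrac{2}{k!}$ and the factor $2$ above, the $k!$ cancels and the values $l^{n+i+1-k}$ at $l=2$ and $l=1$ combine into $2^{\,n+i-k+1}-1$. Using $(-1)^{\,n+1-i}=(-1)^{n+1}(-1)^i$, this produces
$$\text{RHS}=4(-1)^{n+1}\sum_{i=0}^{n+1}(-1)^i3^{\,n-i+1}\binom{n+1}i\binom{n+i+1}k\bigl(2^{\,n+i-k+1}-1\bigr).$$

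Finally I would assemble the two cases. For $k$ even, $\delta_{2,k}=0$ forces the left-hand side to vanish, hence $\text{RHS}=0$; dividing out the nonzero constant $4(-1)^{n+1}$ yields part~(2). For $k$ odd, $\delta_{2,k}=2$ gives left-hand side $=4\Sigma$, so equating with the displayed $\text{RHS}$ and dividing by $4$ produces $\Sigma=(-1)^{n+1}\sum_i(\cdots)$; moving everything to one side and using $-(-1)^{n+1}=(-1)^n$ recombines the $E$-sum and the power-sum into the single sum $\sum_{i=0}^{n+1}(-1)^i3^{\,n-i+1}\binom{n+1}i\binom{n+i+1}k\bigl((-1)^iE_{n+i-k+1}(0)+(-1)^n(2^{\,n+i-k+1}-1)\bigr)=0$, which is part~(1) (here $(-1)^i(-1)^i=1$ restores the $E$-coefficient). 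The only real work, and the place where care is needed, is the bookkeeping of signs and constants in the right-hand-side computation: tracking the factor $2$ from the doubled polynomial, the cancellation of $k!$ against $\tfrac{2}{k!}$, and the rewriting $(-1)^{\,n+1-i}=(-1)^{n+1}(-1)^i$ so that the final expression matches the factored form in the statement. No genuinely new idea beyond Theorem~\ref{thm2} is required.
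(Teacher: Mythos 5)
Your proposal is correct and follows exactly the route the paper intends: the paper derives this corollary simply by setting $m=n$, $s=2$, $a=0$ in Theorem \ref{thm2}, and your computation faithfully carries out that substitution, with the sign bookkeeping ($\delta_{2,k}$, the factor $2$ from the doubled polynomial, the cancellation of $k!$, and the rewriting $(-1)^{n+1-i}=(-1)^{n+1}(-1)^i$) all checking out. You have merely supplied the explicit details that the paper leaves to the reader.
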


\begin{theorem}\label{thm3}
Let $k,m\in\mathbb N_0$ with $0\leq k\leq m.$ Then we have
$$\begin{aligned}
\sum_{\substack{i=0\\ m+i~\text{even}}}^{m}\binom mi&\binom{m+i}{k}E_{m+i-k}(a) \\
&=\sum_{j=0}^m(-1)^{m+j}\binom mj\binom{m+j}{k}a^{m+j-k}.
\end{aligned}$$
\end{theorem}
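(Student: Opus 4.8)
The plan is to strip away the parity restriction on the index, convert the restricted sum into an integral against the fermionic measure $\mu_{-1}$ via Witt's formula, and then exploit two reflection symmetries of the polynomial $g(t):=t^m(1+t)^m$ to reduce everything to a single evaluation of $g^{(k)}$. First I would write the parity constraint ``$m+i$ even'' through the indicator $\tfrac{1+(-1)^{m+i}}{2}$, which expresses the left-hand side as $\tfrac12\bigl(S_1+(-1)^mS_2\bigr)$, where
\[
S_1=\sum_{i=0}^{m}\binom mi\binom{m+i}{k}E_{m+i-k}(a),
\qquad
S_2=\sum_{i=0}^{m}(-1)^i\binom mi\binom{m+i}{k}E_{m+i-k}(a).
\]

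Next, I would apply Witt's formula $E_{m+i-k}(a)=\int_{\mathbb{Z}_p}(x+a)^{m+i-k}\,d\mu_{-1}(x)$ together with the elementary identity $\binom{m+i}{k}(x+a)^{m+i-k}=\tfrac1{k!}\tfrac{d^k}{dt^k}t^{m+i}\big|_{t=x+a}$ (valid since $k\le m\le m+i$), pulling the $k$-th derivative and the summation inside the integral. The binomial theorem then collapses the series, $\sum_i\binom mi t^{m+i}=t^m(1+t)^m=g(t)$ and $\sum_i(-1)^i\binom mi t^{m+i}=t^m(1-t)^m=:h(t)$, giving
\[
S_1=\frac1{k!}\int_{\mathbb{Z}_p}g^{(k)}(x+a)\,d\mu_{-1}(x),
\qquad
S_2=\frac1{k!}\int_{\mathbb{Z}_p}h^{(k)}(x+a)\,d\mu_{-1}(x),
\]
where $g^{(k)}$ denotes the $k$-th derivative.

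The crux is two symmetries of $g$, both immediate from $g(t)=[t(1+t)]^m$: the reflection $h(t)=(-1)^m g(-t)$ and the self-symmetry $g(-1-t)=g(t)$. Differentiating these $k$ times yields $(-1)^m h^{(k)}(t)=(-1)^k g^{(k)}(-t)$ and $g^{(k)}(-1-t)=(-1)^k g^{(k)}(t)$. Combining them (the second applied with $t=-x-a$) I would rewrite
\[
\begin{aligned}
\tfrac12\bigl(S_1+(-1)^mS_2\bigr)
&=\frac1{2k!}\int_{\mathbb{Z}_p}\bigl[g^{(k)}(x+a)+(-1)^k g^{(k)}(-x-a)\bigr]\,d\mu_{-1}(x)\\
&=\frac1{2k!}\int_{\mathbb{Z}_p}\bigl[g^{(k)}(x+a)+g^{(k)}(x+a-1)\bigr]\,d\mu_{-1}(x).
\end{aligned}
\]
Setting $F(x):=g^{(k)}(x+a-1)$, so that $F(x+1)=g^{(k)}(x+a)$, the integrand is $F(x+1)+F(x)$, and the basic functional equation of the fermionic integral $\int_{\mathbb{Z}_p}[F(x+1)+F(x)]\,d\mu_{-1}(x)=2F(0)$ (equivalently the addition formula $E_n(a+1)+E_n(a)=2a^n$, which follows from combining (\ref{Eu-pol-3}) and (\ref{Eu-pol-4})) collapses the left-hand side to $\tfrac1{k!}g^{(k)}(a-1)$.

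Finally I would expand the right-hand side by the same derivative trick, obtaining
\[
\sum_{j=0}^m(-1)^{m+j}\binom mj\binom{m+j}{k}a^{m+j-k}
=\frac{(-1)^m}{k!}\frac{d^k}{da^k}\bigl[a^m(1-a)^m\bigr].
\]
Since $a^m(1-a)^m=(-1)^m a^m(a-1)^m=(-1)^m g(a-1)$ as a function of $a$, the two factors $(-1)^m$ cancel and the right-hand side is also $\tfrac1{k!}g^{(k)}(a-1)$, which matches the left-hand side and finishes the proof. I expect the main obstacle to be the third step: tracking the signs $(-1)^k$ and $(-1)^m$ through both reflections and the $k$ differentiations, so that the ``reflected'' argument $-x-a$ is correctly converted into the unit shift $x+a-1$ that makes the functional equation of $\mu_{-1}$ applicable; the rest is routine binomial bookkeeping.
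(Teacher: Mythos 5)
Your proof is correct and is essentially the paper's own argument: both hinge on applying the fermionic functional equation $\int_{\mathbb Z_p}\bigl(F(x+1)+F(x)\bigr)\,d\mu_{-1}(x)=2F(0)$ to the $k$-th derivative of $q(x)=(x+a)^m(x+a-1)^m$ (which is exactly your $F(x)=g^{(k)}(x+a-1)$), with Witt's formula converting the integrals into Euler polynomials and the evaluation at $0$ producing the right-hand side. The paper simply runs the computation in the forward direction---expanding $q(x)$ and $q(x+1)$ by the binomial theorem so that the parity restriction emerges from the factor $1+(-1)^{m+i}$---rather than splitting the sum with the indicator $\tfrac{1}{2}(1+(-1)^{m+i})$ and reassembling it through the two reflection symmetries of $t^m(1+t)^m$.
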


This can be regarded as an Euler polynomials version of Alzer and Kwong's  formulae which was stated for
Bernoulli polynomials (see \cite[Theorem 1]{AK}).

An application of Theorem \ref{thm3} leads to

\begin{theorem}\label{thm3-1}
\begin{enumerate}
\item For $0\leq k\leq m,$ we have
$$\begin{aligned}
\sum_{\substack{i=0\\ m+i~\text{even}}}^{m}\binom mi\binom{m+i}{k}\binom{m+i-k}{m-k}E_i(0)=(-1)^m\binom{m}{k}.
\end{aligned}$$
\item For $0\leq k\leq m-1,$ we have
$$\sum_{\substack{i=0\\ m+i~\text{even}}}^{m}\binom mi\binom{m+i}{k}\binom{m+i-k}{m-k-1}E_{i+1}(0)=0.$$
\item For $0\leq l\leq m-k-1,$ we have
$$\sum_{\substack{i=0\\ m+i~\text{even}}}^{m}\binom mi\binom{m+i}{k}\binom{m+i-k}{l}E_{m+i-k-l}(0)=0.$$
\item For $1\leq j\leq m$ and $0\leq k\leq m,$ we have
$$\sum_{\substack{i=j\\ m+i~\text{even}}}^{m}\binom mi\binom{m+i}{k}\binom{m+i-k}{m+j-k}E_{i-j}(0)
=(-1)^{m+j}\binom mj\binom{m+j}{k}.$$
\end{enumerate}
\end{theorem}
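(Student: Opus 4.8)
The plan is to read Theorem~\ref{thm3} as an identity between two polynomials in the variable $a$, and then to obtain the four assertions of Theorem~\ref{thm3-1} by comparing the coefficients of the various powers of $a$ on the two sides.

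First I would expand every Euler polynomial occurring on the left-hand side of Theorem~\ref{thm3}. By Witt's formula together with the binomial theorem (the same device used to prove the earlier results), one has
\[
E_{m+i-k}(a)=\int_{\mathbb{Z}_p}(x+a)^{m+i-k}\,d\mu_{-1}(x)
=\sum_{l=0}^{m+i-k}\binom{m+i-k}{l}E_{m+i-k-l}(0)\,a^{l}.
\]
Substituting this into the left-hand side of Theorem~\ref{thm3} and interchanging the order of summation turns that side into a polynomial $\sum_l c_l\,a^{l}$ whose coefficient of $a^{l}$ is
\[
c_l=\sum_{\substack{i=0\\ m+i~\mathrm{even}}}^{m}\binom{m}{i}\binom{m+i}{k}\binom{m+i-k}{l}E_{m+i-k-l}(0),
\]
where the summation is effectively restricted to those $i$ with $m+i-k\ge l$, because $\binom{m+i-k}{l}$ vanishes otherwise. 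The right-hand side of Theorem~\ref{thm3} is already written as a polynomial, namely $\sum_{j=0}^{m}(-1)^{m+j}\binom{m}{j}\binom{m+j}{k}a^{m+j-k}$, so the power $a^{l}$ appears there precisely when $l=m+j-k$ for some $j\in\{0,\dots,m\}$, with coefficient $(-1)^{m+j}\binom{m}{j}\binom{m+j}{k}$.

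Both sides are polynomials of degree $2m-k$, and Theorem~\ref{thm3} holds for every $a$; hence I may equate the coefficients of $a^{l}$ for each $l$. The four parts then correspond to four choices of $l$. Taking $l=m-k$, so that $m+i-k-l=i$ and the matching right-hand coefficient is the one with $j=0$, gives part (1). Taking $l=m+j-k$ with $1\le j\le m$, so that $m+i-k-l=i-j$ and the lower summation bound becomes $i\ge j$, gives part (4). Finally, any $l$ with $0\le l\le m-k-1$ lies strictly below the smallest power $m-k$ present on the right-hand side, so $c_l=0$; this is part (3), of which part (2) is the special case $l=m-k-1$ (where $m+i-k-l=i+1$).

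The argument is essentially routine once the expansion above is recorded; the only point requiring care is the index bookkeeping. I would need to check that the parity restriction ``$m+i$ even'' and the lower summation limits coming from the condition $m+i-k\ge l$ (namely $i\ge j$ in part (4) and $i\ge 0$ in parts (1)--(3)) survive the interchange of summation and agree exactly with the ranges stated in each part. I expect this verification, rather than any genuine analytic difficulty, to be where the work lies.
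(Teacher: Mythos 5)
Your proposal is correct. The coefficient-extraction step is sound: the expansion $E_{n}(a)=\sum_{l=0}^{n}\binom{n}{l}E_{n-l}(0)a^{l}$ follows from Witt's formula and the binomial theorem exactly as you indicate (or from the Appell property $E_n'=nE_{n-1}$), Theorem \ref{thm3} is a polynomial identity in $a$, and your four choices of $l$ (namely $l=m-k$, $l=m+j-k$ with $1\le j\le m$, and $0\le l\le m-k-1$) match the four parts, with the vanishing of $\binom{m+i-k}{l}$ for $m+i-k<l$ correctly accounting for the lower limit $i\ge j$ in part (4). Your route is genuinely different in mechanism from the paper's, though closely parallel in spirit: the paper does not literally invoke Theorem \ref{thm3} but instead returns to the polynomial $r(x)=x^m(x-1)^m$, applies further derivatives $\bigl(\tfrac{d}{dx}\bigr)^{m-k}$ (resp. $m-k-1$, $l$, $m+j-k$) to $r^{(k)}(x+1)+r^{(k)}(x)$ and to $r^{(k)}(x)$, and then re-applies the fermionic integral identity $I_{-1}(f_1)=-I_{-1}(f)+2f(0)$ together with Witt's formula at $a=0$ for each part. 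Since the coefficient of $a^{l}$ is $\tfrac{1}{l!}$ times the $l$-th derivative at $a=0$, the two arguments are computing the same thing; what yours buys is that Theorem \ref{thm3-1} becomes a purely formal corollary of Theorem \ref{thm3} with no further $p$-adic input, which is cleaner and makes the logical dependence explicit, whereas the paper's version stays self-contained within the integral framework. The only point to be careful about, as you note, is that terms with $m+i-k<l$ must be read as zero via the vanishing binomial coefficient (so that no Euler number of negative index is ever actually invoked); in each of the four parts the stated ranges guarantee all surviving indices $m+i-k-l$ are nonnegative.
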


\begin{remark}\label{rem2}
In the case $k=0$ and $l=1,$ Theorem \ref{thm3-1}(3) turns out to be Corollary \ref{cro2} since
$E_n(0)=0$ if $n$ is even. When $k=0$ and $l=3,$ Theorem \ref{thm3-1}(3) yield the following interesting identity:
\begin{equation}\label{rem2-1}
\sum_{i=0}^{m}\binom mi(m+i)(m+i-1)(m+i-2)E_{m+i-3}(0)=0, \quad m\geq3,
\end{equation}
since $E_m(0)=0$ if $m$ is even, and it remains valid if replaced $E_{n}(0)$  by Bernoulli numbers $B_{n}$ 
(see \cite[Theorem 7.1]{CS}).
\end{remark}

\section{The fermionic $p$-adic integrals and Witt's formula}
In the next two sections, we shall use the following notations.
\begin{equation*}
\begin{aligned}
\qquad p  ~~~&- ~\textrm{an odd rational prime number}. \\
\qquad\mathbb{Z}_p  ~~~&- ~\textrm{the ring of $p$-adic integers}. \\
\qquad\mathbb{Q}_p~~~&- ~\textrm{the field of fractions of}~\mathbb Z_p.\\
\qquad\mathbb C_p ~~~&- ~\textrm{the completion of a fixed algebraic closure}~\overline{\mathbb Q}_p~ \textrm{of}~\mathbb Q_{p}.
\end{aligned}
\end{equation*}

By using Cauchy's formula, it is known that Bernoulli numbers and Euler numbers can be represented by the  contour integrals
\begin{align}
B_n&=\frac{n!}{2\pi i}\oint \frac{z}{e^z-1}\frac{d
z}{z^{n+1}},\label{condefi}\\[4pt]
E_n&=\frac{n!}{2\pi i}\oint \frac{2e^z}{e^{2z}+1}\frac{d
z}{z^{n+1}}\label{condefieuler}
\end{align}
(see \cite[(2.1) and (2.2)]{CS}).

The following Witt's formula represents  Euler polynomials through the fermionic $p$-adic integrals. 
\begin{lemma}[Witt's formula]\label{lem2}
For any $n\in\mathbb N_0$ and $x\in\mathbb Q_p,$ we have
$$\int_{\mathbb Z_p}(x+a)^nd\mu_{-1}(x)=E_n(a).$$
Furthermore, $\int_{\mathbb Z_p}d\mu_{-1}(x)=E_0(a)=1.$
\end{lemma}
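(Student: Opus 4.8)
The plan is to establish Witt's formula by recalling the explicit construction of the fermionic $p$-adic measure $\mu_{-1}$ and computing the integral of $(x+a)^n$ directly from the definition. The fermionic measure is built from the weights $\mu_{-1}(a+p^N\mathbb{Z}_p)=(-1)^a$ on the basic clopen sets, and the integral of a continuous function $f$ is obtained as a limit of Riemann-type sums over these sets. So the first step is to write
\begin{equation*}
\int_{\mathbb{Z}_p}f(x)\,d\mu_{-1}(x)=\lim_{N\to\infty}\sum_{a=0}^{p^N-1}f(a)(-1)^a,
\end{equation*}
which converges because $p$ is odd, so that $(-1)^a$ alternates and the partial sums telescope in a controlled way.

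**Next I would** specialize to $f(x)=(x+a)^n$ and seek to identify the resulting alternating sum with the Euler polynomial. The cleanest route is through generating functions: rather than evaluating the limit of power sums directly, I would compute the exponential generating function of the integrals. That is, I would establish
\begin{equation*}
\int_{\mathbb{Z}_p}e^{(x+a)t}\,d\mu_{-1}(x)=\frac{2e^{at}}{e^t+1},
\end{equation*}
by applying the defining limit to $e^{xt}$ and summing the resulting alternating geometric series $\sum_{a=0}^{p^N-1}(-1)^a e^{at}$, whose limit as $N\to\infty$ reduces to $2/(e^t+1)$. Comparing with the generating function (\ref{Eu-pol}) for $E_n(a)$ and matching coefficients of $t^n/n!$ then yields $\int_{\mathbb{Z}_p}(x+a)^n\,d\mu_{-1}(x)=E_n(a)$. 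The final assertion about $E_0(a)=1$ is immediate by taking $n=0$, since the integrand is the constant $1$ and the alternating sum of $1$'s over a full residue system mod $p^N$ (with $p$ odd) equals $1$.

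**The main obstacle** will be handling the interchange of the limit defining the integral with the infinite series expansion of $e^{xt}$, and justifying the convergence of the alternating sum at the level of the $p$-adic (or formal) generating function; one must be careful that the limit $\lim_{N\to\infty}\sum_{a=0}^{p^N-1}(-1)^a e^{at}$ is taken in the appropriate sense and that the odd-prime hypothesis is genuinely used to guarantee $(-1)^{p^N}=-1$, which is what produces the cancellation collapsing the geometric sum to its closed form. Once the generating-function identity is secured, the coefficient extraction is purely formal and routine, so I would allocate the bulk of the argument to setting up the measure and verifying this single limit.
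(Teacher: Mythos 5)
Your proposal is correct, but it takes a more hands-on route than the paper. The paper disposes of Witt's formula in one line, as a consequence of the functional equation (\ref{fer-sim}), $F(a+1)+F(a)=2f(a)$, together with Lemma \ref{lem1}: applying that equation to $f(x)=x^n$ gives $F(a+1)+F(a)=2a^n$ for $F(a)=\int_{\mathbb Z_p}(x+a)^n\,d\mu_{-1}(x)$, and since $F$ is visibly a polynomial of degree $n$ in $a$ and the difference equation $P(a+1)+P(a)=2a^n$ has a unique polynomial solution (the homogeneous equation forces $P=0$ by comparing leading coefficients), $F$ must coincide with $E_n(a)$, which satisfies the same relation by (\ref{Eu-pol}). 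You instead go back to the raw definition (\ref{-q-e}) and sum the alternating geometric series $\sum_{x=0}^{p^N-1}(-e^t)^x=(1+e^{p^Nt})/(1+e^t)$, using that $p^N$ is odd and that $e^{p^Nt}\to 1$ $p$-adically, then match coefficients against (\ref{Eu-pol}). This is a legitimate and self-contained derivation that makes the role of the odd-prime hypothesis explicit, but it carries exactly the analytic burden you identify: the manipulation is only valid for $t$ in the disk of convergence of the $p$-adic exponential, $|t|_p<p^{-1/(p-1)}$, and the interchange of the limit over $N$ (or of the integral) with the series $\sum_n t^n(x+a)^n/n!$ must be justified by the boundedness of $\mu_{-1}$ and uniform convergence on $\mathbb Z_p$. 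The paper's route via the functional equation avoids all convergence questions by staying with the polynomial $(x+a)^n$ throughout, at the cost of citing the machinery of \cite{KS}; your route buys transparency at the cost of these (routine but necessary) analytic justifications, which your sketch flags but does not yet carry out.
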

In this section, we shall give a brief overview of the definition and identities for the fermionic $p$-adic integrals, for details, we refer to \cite{KS}. 
The fermionic $p$-adic integral of a function $f:\mathbb Z_p\to\C$ is defined by
\begin{equation}\label{-q-e}
I_{-1}(f)=\int_{\mathbb Z_p}f(x)d\mu_{-1}(x)=\lim_{N\rightarrow\infty}\sum_{x=0}^{p^N-1}f(x)(-1)^x,
\end{equation}
and  this limits exists if $f$ is continuous on $\Z.$
 The fermionic $p$-adic integral (\ref{-q-e}) were independently found by Katz \cite[p.~486]{Katz} (in Katz's notation, the $\mu^{(2)}$-measure), Shiratani and
Yamamoto \cite{Shi}, Christol \cite{Gi}, Osipov \cite{Osipov}, Lang~\cite{Lang} (in Lang's notation, the $E_{1,2}$-measure), T. Kim~\cite{TK} from very different viewpoints.
Recently, the fermionic $p$-adic integral on $\mathbb Z_p$  is
used by the authors \cite{KS} to present many   properties of the $p$-adic Hurwitz-type Euler zeta functions,
including Laurent series expansions, functional equations, derivative formulas, and $p$-adic Raabe formulas,
and it has also been used by Kim \cite{2019} and Ma\"iga \cite{Ma} to give some new identities and
congruences concerning Euler numbers and polynomials.

Let $D\subset\C$ is an arbitrary subset closed under $a\to a+x$ for $x\in\Z$ and $a\in D.$
That is, $D$ could be $\C\backslash\Z,\Q\backslash\Z$ or $\Z.$

For a continuous function $f$  on $\Z,$ the fermionic $p$-adic integral
\begin{equation}\label{fer-sim}
F(a)=\int_{\mathbb Z_p}f(x+a)d\mu_{-1}(x), \quad(a\in D)
\end{equation}
is then defined and satisfies the equation
\begin{equation}\label{fer-sim}
F(a+1)+F(a)=2f(a).
\end{equation}
(See, e.g., \cite[p.~2982, Theorem 2.2(1)]{KS}).
From (\ref{fer-sim}), we have
\begin{equation}\label{fer-sim-2}
F(a+q)+F(a+q-1)=2f(a+q-1),
\end{equation}
where $a\in D$ and $q\in \mathbb N.$
Adding and subtracting this identity alternatively for $q=1,2,\ldots,n$  gives the formula
\begin{equation}\label{fer-sim-3}
(-1)^{q-1}F(a+q)+F(a)=2\sum_{i=0}^{q-1}(-1)^if(a+i),
\end{equation}
where $a\in D$ and $q\in \mathbb N.$

Note that the above identity (\ref{fer-sim-3}) reduces to  \cite[Theorem 2]{TK} by setting $a=0.$

In order to prove (\ref{WSP(7)}), Theorem \ref{thm1}, \ref{sun-thm}, \ref{thm2}, \ref{thm3} and \ref{thm3-1}, we need the following lemma which has been  obtained by 
T. Kim in \cite[Lemma 1]{TK}, and
Kim and Hu in \cite[Theorem 2.2(2)]{KS}.

\begin{lemma}\label{lem1}
Let $f$  be a continuous function on $\Z.$ We have the functional equation
\begin{equation}\label{Lem}
\begin{aligned}I_{-1}(f_-)&=I_{-1}(f_1)\\&=-I_{-1}(f)+2f(0),
\end{aligned}
\end{equation}
where $f_-(x)=f(-x),f_1(x)=f(x+1)$ for all $x\in\mathbb Z_p.$
In particular, if $f$ is an even function, then
$$I_{-1}(f)=f(0).$$
\end{lemma}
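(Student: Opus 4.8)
The plan is to work entirely from the Riemann-sum definition (\ref{-q-e}) of $I_{-1}$, exploiting two features of the $p$-adic setting: that $p$ is odd (so $(-1)^{p^N}=-1$), and that $f$, being continuous on the compact set $\Z$, is uniformly continuous with values in $\C$, where the ultrametric inequality lets one bound a sum of $p^N$ small terms by the largest of them. The target is the chain $I_{-1}(f_-)=I_{-1}(f_1)=-I_{-1}(f)+2f(0)$, which I would establish by proving the two outer equalities separately and matching the common value $-I_{-1}(f)+2f(0)$.

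First I would dispose of $I_{-1}(f_1)=-I_{-1}(f)+2f(0)$, which is immediate from the already-established functional equation $F(a+1)+F(a)=2f(a)$ of (\ref{fer-sim}): taking $a=0$ with $F(a)=\int_{\Z}f(x+a)\,d\mu_{-1}(x)$ gives $I_{-1}(f_1)+I_{-1}(f)=2f(0)$, since $F(0)=I_{-1}(f)$ and $F(1)=I_{-1}(f_1)$.

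The substantive step is the reflection identity $I_{-1}(f_-)=-I_{-1}(f)+2f(0)$, which I would prove directly from the limit in (\ref{-q-e}). Writing the defining sum $\sum_{x=0}^{p^N-1}f(-x)(-1)^x$, I would isolate the $x=0$ term $f(0)$ and, on the remaining range $1\le x\le p^N-1$, substitute $y=p^N-x$. Then $-x=y-p^N$ as $p$-adic integers, so $\lvert(-x)-y\rvert_p=p^{-N}$, and since $p$ is odd one has $(-1)^x=(-1)^{p^N-y}=-(-1)^y$. By uniform continuity, $f(-x)=f(y)+\varepsilon_{N,y}$ with $\max_y\lvert\varepsilon_{N,y}\rvert_p\to0$, and the ultrametric inequality then guarantees that the accumulated error $\sum_y\varepsilon_{N,y}(-1)^y$ tends to $0$ in $\C$ despite there being $p^N-1$ terms. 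What survives is $-\sum_{y=1}^{p^N-1}f(y)(-1)^y$; restoring the $y=0$ term rewrites the whole expression as $f(0)-\bigl(\sum_{y=0}^{p^N-1}f(y)(-1)^y\bigr)+f(0)$, which converges to $2f(0)-I_{-1}(f)$. Combining with the first step yields $I_{-1}(f_-)=I_{-1}(f_1)$.

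The final assertion is then immediate: if $f$ is even then $f_-=f$, so $I_{-1}(f)=I_{-1}(f_-)=-I_{-1}(f)+2f(0)$, and solving gives $I_{-1}(f)=f(0)$. I expect the only delicate point to be the control of the error term $\sum_y\varepsilon_{N,y}(-1)^y$: this is precisely where the non-archimedean nature of $\C$ is essential, since in an archimedean setting a sum of $p^N$ terms of size $\varepsilon$ need not be small. One should also confirm that the substitution $y=p^N-x$ together with the continuity estimate is applied uniformly in $y$, which follows from the compactness of $\Z$.
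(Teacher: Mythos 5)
Your proof is correct, but note that the paper itself does not prove this lemma at all --- it is quoted from T.~Kim \cite[Lemma 1]{TK} and Kim--Hu \cite[Theorem 2.2(2)]{KS} --- so what you have produced is a self-contained verification from the definition (\ref{-q-e}) that the paper omits. The substantive part, the reflection identity $I_{-1}(f_-)=-I_{-1}(f)+2f(0)$, is handled exactly right: the re-indexing $y=p^N-x$, the observation that $-x$ and $y=p^N-x$ differ by $p^N$ so that uniform continuity of $f$ on the compact set $\Z$ controls $f(-x)-f(y)$ uniformly, the sign flip $(-1)^{p^N-y}=-(-1)^y$ from $p$ being odd, and the ultrametric bound $\bigl|\sum_y\varepsilon_{N,y}(-1)^y\bigr|_p\le\max_y|\varepsilon_{N,y}|_p$ are precisely the points on which the lemma turns, and you correctly flag the last one as the place where the non-archimedean setting is essential. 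One stylistic remark: deriving $I_{-1}(f_1)=-I_{-1}(f)+2f(0)$ by specializing the functional equation $F(a+1)+F(a)=2f(a)$ to $a=0$ is logically admissible within the paper's ordering (that equation is presented as established before the lemma), but it is very nearly a restatement of the claim being proved; the same index-shift technique you use for $f_-$ gives it directly in one line, since $\sum_{x=0}^{p^N-1}f(x+1)(-1)^x=-\sum_{y=0}^{p^N-1}f(y)(-1)^y+f(0)+f(p^N)$ and $f(p^N)\to f(0)$ by continuity. With that minor substitution your argument is fully self-contained and proves everything the cited references are invoked for.
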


Using (\ref{fer-sim}) and Lemma \ref{lem1}, we get Witt's  formula for Euler polynomials (see Lemma \ref{lem2} above).

\section{Proofs of (\ref{WSP(7)}), Theorem \ref{thm1}, \ref{sun-thm}, \ref{thm2}, \ref{thm3} and \ref{thm3-1}}\label{proofs}

In this section, we prove six results for Euler polynomials. Many more identities can be obtained from this way easily.

\begin{proof}[Proof of (\ref{WSP(7)}).]

For $a\in D,$ we consider the polynomial function
$$f(x)=(-1)^m(x+a)^m(x+a-1)^n$$
on $\mathbb Z_p.$
Then by the binomial theorem, we have
\begin{equation}\label{eq1} f(x+1)=(-1)^m(x+a+1)^m(x+a)^n=(-1)^m\sum_{i=0}^m\binom mi(x+a)^{n+i}\end{equation}
and
\begin{equation}\label{eq2} f(-x)=(-1)^n\sum_{j=0}^n\binom nj(x-a)^{m+j}.\end{equation}
Note that $f(x)$ is continuous on $\Z,$ as a product of two such functions, so
the fermionic $p$-adic integral for the functions in the above equalities are well--defined.
By integrating both sides of (\ref{eq1}) and (\ref{eq2}) respectively, we have \begin{equation}\label{eq3} \int_{\mathbb Z_p}f(x+1)d\mu_{-1}(x)=(-1)^m\sum_{i=0}^m\binom mi\int_{\mathbb Z_p}(x+a)^{n+i}d\mu_{-1}(x),\end{equation}
and \begin{equation}\label{eq4} \int_{\mathbb Z_p}f(-x)d\mu_{-1}(x)=(-1)^n\sum_{j=0}^n\binom nj\int_{\mathbb Z_p}(x-a)^{m+j}d\mu_{-1}(x).\end{equation}
  (\ref{Lem}) in Lemma \ref{lem1} leads to \begin{equation}\label{eq5} \int_{\mathbb Z_p}f(x+1)d\mu_{-1}(x)= \int_{\mathbb Z_p}f(-x)d\mu_{-1}(x).\end{equation}
Then comparing (\ref{eq3}) (\ref{eq4}) and (\ref{eq5}), we have $$(-1)^m\sum_{i=0}^m\binom mi\int_{\mathbb Z_p}(x+a)^{n+i}d\mu_{-1}(x)=(-1)^n\sum_{j=0}^n\binom nj\int_{\mathbb Z_p}(x-a)^{m+j}d\mu_{-1}(x).$$
Applying Lemma \ref{lem2}, the Witt's formula,  we get (\ref{WSP(7)}).
\end{proof}

\begin{proof}[Proof of Theorem \ref{thm1}.]

Let $a\in D,q,k\in\mathbb N$ and let $m$ and $n$ be nonnegative integers with $m+n>0.$
If we define the polynomials function $h(x)$ on $\mathbb Z_p$ by
$$h(x)=(x+a)^{m+q}(x+a-1)^{n+q}+(-1)^{m+n}(x-a-1)^{m+q}(x-a)^{n+q},$$
then we have
$$h(x+1)=h(-x).$$
Taking $\left(\frac{d}{dx}\right)^k$ on the  both sides of the above identity, we have
$$h^{(k)}(x+1)=\frac{d^k h(x+1)}{dx^k}=\frac{d^k h(-x)}{dx^k}=(-1)^kh^{(k)}(-x),$$
which gives
$$h^{(k)}(x+1)=-h^{(k)}(-x)\quad\text{for $k$ being odd.}$$
By integrating both sides of the above equality, we get
\begin{equation}\label{le-ra}
\int_{\mathbb Z_p}h^{(k)}(-x)d\mu_{-1}(x)=-\int_{\mathbb Z_p}h^{(k)}(x+1)d\mu_{-1}(x)
\end{equation}
for $k$ being odd, since $h^{(k)}(x)$ is continuous on $\Z.$

Applying  (\ref{Lem}) in Lemma \ref{lem1} to the left hand side of (\ref{le-ra}), we have
\begin{equation}\label{le-ra-1}
\int_{\mathbb Z_p}h^{(k)}(-x)d\mu_{-1}(x)=\int_{\mathbb Z_p}h^{(k)}(x+1)d\mu_{-1}(x).
\end{equation}
Combining (\ref{le-ra}) with (\ref{le-ra-1}), we get the following formula
$$\int_{\mathbb Z_p}h^{(k)}(x+1)d\mu_{-1}(x)=-\int_{\mathbb Z_p}h^{(k)}(x+1)d\mu_{-1}(x),$$
or equivalently,
\begin{equation}\label{le-ra-2}
\int_{\mathbb Z_p}h^{(k)}(x+1)d\mu_{-1}(x)=0
\end{equation}
for $k$ being odd.
To calculate the left-hand side of the above equation, we rewrite $h(x+1)$ in the form
$$h(x+1)=\sum_{i=0}^{m+q}\binom{m+q}i(x+a)^{n+q+i}+(-1)^{m+n}\sum_{j=0}^{n+q}\binom{n+q}j(x-a)^{m+q+j}.$$
Then
$$\begin{aligned}
h^{(k)}(x+1)&=k!\sum_{i=0}^{m+q}\binom{m+q}i\binom{n+q+i}{k}(x+a)^{n+q+i-k} \\
&\quad+(-1)^{m+n}k!\sum_{j=0}^{n+q}\binom{n+q}j\binom{m+q+j}{k}(x-a)^{m+q+j-k}.
\end{aligned}$$
Applying (\ref{le-ra-2}) to the above $k$-th derivative $h^{(k)}(x+1),$ and use Lemma \ref{lem2}, the Witt's formula,  we conclude our assertion for $k$ being odd.
\end{proof}

\begin{proof}[Proof of Theorem \ref{sun-thm}.]

For $a+b+c=1,$ we consider the polynomial function
$$g(x)=(-1)^m(x+a+b-1)^m(x+b-1)^n$$
on $\mathbb Z_p.$

(1) We have
\begin{equation}\label{eq1s} g(x+1)=(-1)^m\sum_{i=0}^m\binom mi a^{m-i}(x+b)^{n+i}.\end{equation}

(2) Since $a+b+c=1,$ we have 
\begin{equation}\label{eq2s}
\begin{aligned}
g(-x)&=(-1)^m(-x+a+b-1)^m(-x+b-1)^n \\
&=(-1)^n(x-a-b+1)^m(x-b+1)^n \\
&=(-1)^n(x+c)^m(x+a+c)^n \\
&=(-1)^n\sum_{j=0}^n\binom nja^{n-j}(x+c)^{m+j}.
\end{aligned}
\end{equation}
By integrating both sides of (\ref{eq1s}) and (\ref{eq2s}) respectively, we have \begin{equation}\label{eq3s} \int_{\mathbb Z_p}g(x+1)d\mu_{-1}(x)=(-1)^m\sum_{i=0}^m\binom mi a^{m-i}\int_{\mathbb Z_p}(x+b)^{n+i}d\mu_{-1}(x)\end{equation}
and \begin{equation}\label{eq4s} \int_{\mathbb Z_p}g(-x)d\mu_{-1}(x)=(-1)^n\sum_{j=0}^n\binom nj a^{n-j}\int_{\mathbb Z_p}(x+c)^{m+j}d\mu_{-1}(x).\end{equation}
(\ref{Lem}) in Lemma \ref{lem1} leads to \begin{equation}\label{eq5s} \int_{\mathbb Z_p}g(x+1)d\mu_{-1}(x)= \int_{\mathbb Z_p}g(-x)d\mu_{-1}(x).\end{equation}
Then comparing (\ref{eq3s}) (\ref{eq4s}) and (\ref{eq5s}), 
we get
$$\begin{aligned}
(-1)^m\sum_{i=0}^m\binom mi a^{m-i}&\int_{\mathbb Z_p}(x+b)^{n+i}d\mu_{-1}(x) \\
&=(-1)^n\sum_{j=0}^n\binom nj a^{n-j}\int_{\mathbb Z_p}(x+c)^{m+j}d\mu_{-1}(x).
\end{aligned}$$
This implies Theorem \ref{sun-thm} by using Lemma \ref{lem2}.
\end{proof}

\begin{proof}[Proof of Theorem \ref{thm2}.]

Let $a\in D,s\in\mathbb N$ and let $k,m,n\in\mathbb N_0$ such that $m+n>0.$
Let us define the polynomials $P_{m,n,s}(x;a)$ by
\begin{equation}\label{pf-1}
\begin{aligned}
P_{m,n,s}(x;a)&=(x+a)^{m+1}(x+a-s-1)^{n+1} \\
&\quad+(-1)^{m+n}(x-a)^{n+1}(x-a-s-1)^{m+1}
\end{aligned}
\end{equation}
on $\mathbb Z_p.$
Then we have
$$P_{m,n,s}(x+s+1;a)=P_{m,n,s}(-x;a),$$
which gives
\begin{equation}\label{k-diff}
P_{m,n,s}^{(k)}(x+s+1;a)=(-1)^kP_{m,n,s}^{(k)}(-x;a).
\end{equation}
It is easily seen that
$$\begin{aligned}
\sum_{l=1}^s(-1)^l&\left(P_{m,n,s}^{(k)}(x+j+1;a)+P_{m,n,s}^{(k)}(x+j;a)\right) \\
&=-P_{m,n,s}^{(k)}(x+1;a)+(-1)^sP_{m,n,s}^{(k)}(x+s+1;a).
\end{aligned}$$
Therefore we have
$$\begin{aligned}
(-1)^s&\int_{\mathbb Z_p}P_{m,n,s}^{(k)}(x+s+1;a)d\mu_{-1}(x) \\
&=\sum_{l=1}^s(-1)^l\int_{\mathbb Z_p}\left(P_{m,n,s}^{(k)}(x+l+1;a)+P_{m,n,s}^{(k)}(x+l;a)\right)d\mu_{-1}(x) \\
&\quad+\int_{\mathbb Z_p}P_{m,n,s}^{(k)}(x+1;a)d\mu_{-1}(x) \\
&=2\sum_{l=1}^s(-1)^lP_{m,n,s}^{(k)}(l;a)+\int_{\mathbb Z_p}P_{m,n,s}^{(k)}(-x;a)d\mu_{-1}(x) \\
&\quad(\text{using (\ref{fer-sim}) and (\ref{Lem}) in Lemma \ref{lem1}}) \\
&=2\sum_{l=1}^s(-1)^lP_{m,n,s}^{(k)}(l;a)+(-1)^k\int_{\mathbb Z_p}P_{m,n,s}^{(k)}(x+s+1;a)d\mu_{-1}(x) \\
&\quad(\text{using (\ref{k-diff})}) \\
\end{aligned}$$
yields that
\begin{equation}\label{pf-2}
\delta_{s,k}\int_{\mathbb Z_p}P_{m,n,s}^{(k)}(x+s+1;a)d\mu_{-1}(x)=2\sum_{l=1}^s(-1)^lP_{m,n,s}^{(k)}(l;a),
\end{equation}
where $\delta_{s,k}=((-1)^s-(-1)^k).$
From (\ref{pf-1}) and the binomial theorem, we have
$$\begin{aligned}
P_{m,n,s}(x+s+1;a)&=\sum_{i=0}^{m+1}\binom{m+1}i(s+1)^{m-i+1}(x+a)^{n+i+1} \\
&\quad+(-1)^{m+n}\sum_{j=0}^{n+1}\binom{n+1}j(s+1)^{n-j+1}(x-a)^{m+j+1},
\end{aligned}$$
which implies 
\begin{equation}\label{pf-3}
\begin{aligned}
P_{m,n,s}^{(k)}(x+s+1;a)&=k!\sum_{i=0}^{m+1}\binom{m+1}i\binom{n+i+1}{k}(s+1)^{m-i+1}\\
&\qquad\times(x+a)^{n+i-k+1} \\
&\quad+(-1)^{m+n}k!\sum_{j=0}^{n+1}\binom{n+1}j\binom{m+j+1}{k}(s+1)^{n-j+1}\\
&\qquad\times(x-a)^{m+j-k+1}.
\end{aligned}
\end{equation}
Since  $\int_{\mathbb Z_p}d\mu_{-1}(x)=E_0(a)=1,$
substituting (\ref{pf-3}) into the left hand side of (\ref{pf-2}), then using Lemma \ref{lem2}, we get
$$\begin{aligned}
&\delta_{s,k}\biggl( \sum_{i=0}^{m+1}(s+1)^{m-i+1}\binom{m+1}i\binom{n+i+1}{k}E_{n+i-k+1}(a)  \\
&\quad\quad +(-1)^{m+n} \sum_{j=0}^{n+1}(s+1)^{n-j+1}\binom{n+1}j\binom{m+j+1}{k}E_{m+j-k+1}(-a)
 \biggl) \\
&\quad\quad=\frac2{k!}\sum_{l=1}^s(-1)^lP_{m,n,s}^{(k)}(l;a).
\end{aligned}$$
This completes our proof.
\end{proof}

\begin{proof}[Proof of Theorem \ref{thm3}.]

For $a\in D,$ we consider the polynomial function
$$q(x)=(x+a)^m(x+a-1)^m$$ 
on $\mathbb Z_p.$
By the binomial theorem, the formula $q(x)$ and $q(x+1)$ can be rewritten as
$$q(x)=\sum_{i=0}^m(-1)^{m+i}\binom mi(x+a)^{m+i},$$
$$q(x+1)=\sum_{i=0}^m\binom mi(x+a)^{m+i},$$
respectively.
Hence, we have
\begin{equation}\label{3-1}
\begin{aligned}
q^{(k)}(x+1)+q^{(k)}(x)
&=k!\sum_{i=0}^m\binom mi\binom{m+i}k(x+a)^{m+i-k}\left(1+(-1)^{m+i} \right) \\
&=\begin{cases}
2k!\sum_{i=0}^m\binom mi\binom{m+i}k(x+a)^{m+i-k} &\text{if $m+i$ even} \\
0 &\text{if $m+i$ odd}
\end{cases}
\end{aligned}
\end{equation}
and
\begin{equation}\label{3-2}
q^{(k)}(0)=k!\sum_{j=0}^m(-1)^{m+j}\binom mj\binom{m+j}ka^{m+j-k}.
\end{equation}
The second equality in (\ref{Lem}) of Lemma \ref{lem1} implies
\begin{equation}\label{3-3}
\int_{\mathbb Z_p}(q^{(k)}(x+1)+q^{(k)}(x))d\mu_{-1}(x)=2q^{(k)}(0).
\end{equation}
On expanding (\ref{3-3}) by (\ref{3-1}) and (\ref{3-2}), we obtain
$$\begin{aligned}
\sum_{\substack{i=0\\ m+i~\text{even}}}^{m}\binom mi&\binom{m+i}{k}\int_{\mathbb Z_p}(x+a)^{m+i-k} d\mu_{-1}(x) \\
&=\sum_{j=0}^m(-1)^{m+j}\binom mj\binom{m+j}{k}a^{m+j-k},
\end{aligned}$$
and the result follows from Lemma \ref{lem2}.
\end{proof}

\begin{proof}[Proof of Theorem \ref{thm3-1}.]

We consider the polynomial function
$$r(x)=x^m(x-1)^m$$ 
on $\mathbb Z_p.$
By the binomial theorem, the formula $r(x)$ and $r(x+1)$ can be rewritten as
$$r(x)=\sum_{i=0}^m(-1)^{m+i}\binom mix^{m+i}\quad\text{and}\quad r(x+1)=\sum_{i=0}^m\binom mix^{m+i},$$
respectively.
Hence, we have
\begin{equation}\label{3-1-r}
\begin{aligned}
r^{(k)}(x+1)+r^{(k)}(x)
&=k!\sum_{i=0}^m\binom mi\binom{m+i}kx^{m+i-k}\left(1+(-1)^{m+i} \right) \\
&=\begin{cases}
2k!\sum_{i=0}^m\binom mi\binom{m+i}kx^{m+i-k} &\text{if $m+i$ even} \\
0 &\text{if $m+i$ odd}
\end{cases}
\end{aligned}
\end{equation}
and
\begin{equation}\label{3-2-r}
r^{(k)}(x)=k!\sum_{j=0}^m(-1)^{m+j}\binom mj\binom{m+j}kx^{m+j-k}.
\end{equation}

To see Part (1), note that by (\ref{3-1-r}),
\begin{equation}\label{3-1-1}
\begin{aligned}
\left(\frac{d}{dx}\right)^{m-k}&\biggl(r^{(k)}(x+1)+r^{(k)}(x)\biggl) \\
&=2k!(m-k)!\sum_{\substack{i=0\\ m+i~\text{even}}}^{m}\binom mi\binom{m+i}k\binom{m+i-k}{m-k}x^{i}.
\end{aligned}
\end{equation}
Similarly, by (\ref{3-2-r}) we obtain
\begin{equation}\label{3-1-2}
\begin{aligned}
\left(\frac{d}{dx}\right)^{m-k}&\biggl(r^{(k)}(x)\biggl)\biggl|_{x=0} \\
&=k!(m-k)!\sum_{j=0}^{m}(-1)^{m+j}\binom mj\binom{m+j}k\binom{m+j-k}{m-k}x^{j}\biggl|_{x=0} \\
&=k!(m-k)!(-1)^m\binom mk,
\end{aligned}
\end{equation}
since $0^j=1$ if $j=0$ and $0^j=0$ if $j\in\mathbb N.$

The second equality in (\ref{Lem}) of Lemma \ref{lem1} implies
\begin{equation}\label{3-3-2}
\int_{\mathbb Z_p}(r^{(m)}(x+1)+r^{(m)}(x))d\mu_{-1}(x)=2r^{(m)}(0).
\end{equation}
Substituting (\ref{3-1-1}) and (\ref{3-1-2}) into (\ref{3-3-2}), we have
\begin{equation}\label{3-1-3}
\sum_{\substack{i=0\\ m+i~\text{even}}}^{m}\binom mi\binom{m+i}{k}\binom{m+i-k}{m-k}
\int_{\mathbb Z_p}x^id\mu_{-1}(x)
=(-1)^m\binom{m}{k},
\end{equation}
which leads to Part (1) by using Lemma \ref{lem2}, with $a=0.$

%The proofs of (2), (3) and (4) follow exactly along the lines of the proof of (1).
The Parts (2), (3) and (4) can be shown in a similar way with Part (1).
\end{proof}

\bibliography{central}

\end{document}